\tikzstyle{vertex}=[circle,fill=black!100,text=white,inner sep=0.8mm]
\tikzstyle{point}=[circle,fill=black,inner sep=0.1mm]
\newtheorem{theorem}{Theorem}
\newtheorem{lemma}{Lemma}
\newtheorem{corollary}{Corollary}
\date{}
\title{New results on word-representable graphs}
\author{Andrew Collins\thanks{Mathematics Institute, University of Warwick, Coventry, CV4 7AL, UK. 
Email: A.Collins.2@warwick.ac.uk.} 
\and 
Sergey Kitaev\thanks{Department of Computer and Information Sciences, University of Strathclyde, Glasgow, G1 1XH, UK. Email: sergey.kitaev@cis.strath.ac.uk}
\and 
Vadim V. Lozin\thanks{Mathematics Institute and DIMAP, University of Warwick, Coventry, CV4 7AL, UK. 
Email: V.Lozin@warwick.ac.uk.}}
\begin{document}

\maketitle
\thispagestyle{empty}

\begin{abstract} A graph $G=(V,E)$ is word-representable if there exists a word
$w$ over the alphabet $V$ such that letters $x$ and $y$ alternate in $w$ if and 
only if $(x,y)\in E$ for each $x\neq y$. The set of word-representable graphs
generalizes several important and well-studied graph families, such as 
circle graphs, comparability graphs, 3-colorable graphs, graphs of vertex degree 
at most 3, etc. By answering an open question from \cite{HalKitPya2}, in the present 
paper we show that not all graphs of vertex degree at most 4 are word-representable.
Combining this result with some previously known facts, we derive that the number of 
$n$-vertex word-representable graphs is  $2^{\frac{n^2}{3}+o(n^2)}$.
\end{abstract}

{\it Keywords}: Semi-transitive orientation; Hereditary property of graphs; Speed of hereditary properties

\section{Introduction}
Let $G=(V,E)$ be a simple (i.e. undirected, without loops and multiple edges) graph
with vertex set $V$ and edge set $E$. We say $G$ is {\it word-representable} if there exists 
a word $w$ over the alphabet $V$ such that letters $x$ and $y$ alternate in $w$ if and 
only if $(x,y)\in E$ for each $x\neq y$. 

The notion of word-representable graphs has its roots in the study of the celebrated Perkins semigroup \cite{KitSei,S}.
These graphs possess many attractive properties (e.g. a maximum clique in such graphs can be found in polynomial time), 
and they provide a common generalization of several important graph families,  
such as circle graphs, comparability graphs, 3-colorable graphs, graphs of vertex degree 
at most 3 (see \cite{classes} for definitions of these families). 

Recently, a number of fundamental results on word-representable graphs were obtained 
in the literature \cite{HalKitPya1,HalKitPya2,KitPya,KitSalSevUlf1,KitSalSevUlf2}. In particular, 
Halld\'orsson et al. \cite{HalKitPya2} have shown that a graph is word-representable if and only 
if it admits a {\em semi-transitive orientation}. 
However, our knowledge on these graphs is still very limited and many important questions remain open. For example,
how hard is it to decide whether a given graph is 
word-representable or not? What is the minimum length of a word that represents a given graph?
How many word-representable graphs on $n$ vertices are there? Does this family include all graphs
of vertex degree at most 4? 

The last question was originally asked in \cite{HalKitPya2}. In the present paper we answer this question 
negatively by exhibiting a graph of vertex degree at most 4 which is not word-representable. 
This result allows us to obtain an upper bound on the asymptotic growth of the number of
$n$-vertex word-representable graphs. Combining this result with a lower bound that follows from some previously known
facts, we conclude that the number of $n$-vertex word-representable graphs is $2^{\frac{n^2}{3}+o(n^2)}$.

All preliminary information related to the notion of word-representable graphs can be found in Section~\ref{sec:pre}.
In Section~\ref{sec:deg4}, we prove our negative result about graphs of degree at most 4 and in Section~\ref{sec:main}, 
we derive the asymptotic formula on the number of word-representable graphs.

%%%%%%%%%%%%%%%%%%%%%%%%%%%%%%%%%%%%%%%%%%%%%%%%%%%%%%%%%%%%%%%%%%%%%%%%%%%%%
%%%%%%%%%%%%%%%%%%%%%%%%%%%%%%%%%%%%%%%%%%%%%%%%%%%%%%%%%%%%%%%%%%%%%%%%%%%%%
%%%%%%%%%%%%%%%%%%%%%%%%%%%%%%%%%%%%%%%%%%%%%%%%%%%%%%%%%%%%%%%%%%%%%%%%%%%%%

\section{Word-representable graphs: definition, examples and related results}
\label{sec:pre}
%%%%%%%%%%%%%%%%%%%%%%%%%%%%%%%%%%%%%%%%%%%%%%%%%%%%%%%%%%%%%%%%%%%%%%%%%%%%%
%%%%%%%%%%%%%%%%%%%%%%%%%%%%%%%%%%%%%%%%%%%%%%%%%%%%%%%%%%%%%%%%%%%%%%%%%%%%%
%%%%%%%%%%%%%%%%%%%%%%%%%%%%%%%%%%%%%%%%%%%%%%%%%%%%%%%%%%%%%%%%%%%%%%%%%%%%%

Distinct letters $x$ and $y$  {\em alternate} in a word $w$ if the deletion of all other letters
from the word results in either $xyxy\cdots$ or $yxyx\cdots$. 
A graph $G=(V,E)$ is \emph{word-representable} if there exists a word
$w$ over the alphabet $V$ such that letters $x$ and $y$ alternate in
$w$ if and only if $(x,y)\in E$ for each $x\neq y$. For example, 
the graph $M$ in Figure \ref{fig:representable-graphs} is word-representable, 
because the word $w=1213423$ has the right alternating properties, 
i.e. the only non-alternating pairs in this word are 1,3 and 1,4 that 
correspond to the only non-adjacent pairs of vertices in the graph. 

If a graph is word-representable, then there are infinitely many words representing it. 
For instance,  the complete graph $K_4$ in Figure \ref{fig:representable-graphs} can be represented by words
1234, 3142, 123412, 12341234, 432143214321, etc. In general, to represent a complete graph on $n$ vertices, 
one can start with writing up any permutation of length $n$ and adjoining from either side any number of copies 
of this permutation.  

\begin{figure}[ht]
\begin{center}
\includegraphics[scale=0.7]{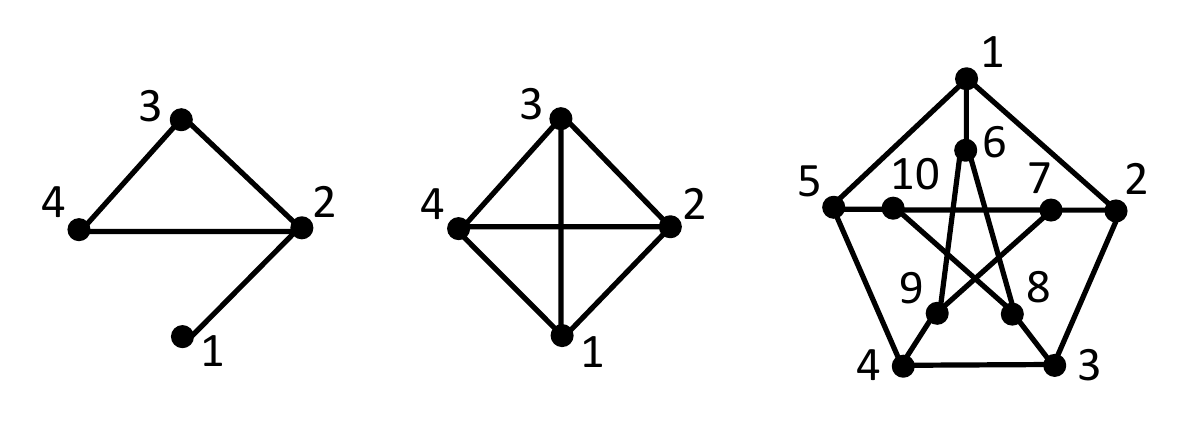}
\end{center}
\vspace{-20pt}
\caption{Three word-representable graphs $M$ (left), the complete graph $K_4$ (middle), and the Petersen graph (right)}
\label{fig:representable-graphs}
\end{figure}

If each letter appears exactly $k$ times in a word representing a graph, the graph is said to be \emph{$k$-word-representable}. 
It is known \cite{KitPya} that any word-representable graph is $k$-word-representable for some $k$. 
For example, a $3$-representation of the Petersen graph shown in Figure \ref{fig:representable-graphs} 
is $$1387296(10)7493541283(10)7685(10)194562.$$
It is not difficult to see that a graph is 1-representable if and only if it is complete.
Also, with a bit of work one can show that a graph is 2-representable if and only if it is a circle graph,
i.e. the intersection graph of chords in a circle. Thus, word-representable graphs generalize both complete graphs and circle graphs.
They also generalize two other important graph families, comparability graphs and 3-colorable graphs.
This can be shown through the notion of semi-transitive orientation. 

A directed graph (digraph) $G=(V,E)$ is \emph{semi-transitive} if it has no directed cycles
and for any directed path $v_1v_2\cdots v_k$ with $k\geq 4$ and $v_i\in V$, 
either $v_1v_k\not\in E$ or $v_iv_j\in E$ for all $1\le i<j\le k$. 
In the second case, when $v_1v_k\in E$, we say that $v_1v_k$ is a shortcut. 
The importance of this notion is due to the following result proved in \cite{HalKitPya2}.

\begin{theorem}[\cite{HalKitPya2}]\label{thm:semi-trans}  
A graph is word-representable if and only if it admits a semi-transitive orientation. 
\end{theorem}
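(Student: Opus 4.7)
\medskip

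\noindent\textbf{Proof proposal for Theorem~\ref{thm:semi-trans}.}

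The statement is an ``if and only if,'' so the plan is to handle the two implications separately. The easier direction is \emph{word-representable $\Rightarrow$ semi-transitively orientable}: given a word $w$ over $V$ that represents $G$, I would define an orientation by setting $x\to y$ whenever $xy$ is an edge of $G$ and the first occurrence of $x$ in $w$ precedes the first occurrence of $y$. Because ``first-occurrence order'' is a total order on $V$, this orientation is automatically acyclic. The task is then to verify the semi-transitivity condition: for any directed path $v_1v_2\cdots v_k$ ($k\ge 4$) with $v_1v_k\in E$, one must show that every pair $v_iv_j$ ($i<j$) is an edge. The argument would unpack the alternation conditions on consecutive pairs $v_iv_{i+1}$ together with the alternation of the ``shortcut'' pair $v_1v_k$, and track positions of occurrences in $w$ to deduce that each $v_i$ must alternate with each $v_j$. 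This is a bookkeeping argument about how alternating pairs interact, and is the kind of step I would present by first proving a small lemma of the form ``if $a,b$ alternate, $b,c$ alternate, and the first $a$ precedes the first $c$, then certain positional constraints hold,'' and then iterating.

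The harder direction is \emph{semi-transitively orientable $\Rightarrow$ word-representable}: starting from a semi-transitive orientation $\vec G$ of $G$, I need to construct a word $w$ over $V$ whose alternation graph is exactly $G$. My plan is to proceed by induction on $|V|$. The base case is trivial. For the inductive step, choose a vertex $v$ that is a source of $\vec G$ (which exists because $\vec G$ is acyclic). By induction, $G-v$ has a word representation $w'$ coming from the restricted semi-transitive orientation. The strategy is then to obtain $w$ from $w'$ by inserting copies of $v$ in carefully chosen positions so that $v$ alternates in $w$ with exactly the out-neighbours of $v$ in $\vec G$ (which are the $G$-neighbours of $v$, since $v$ is a source). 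Concretely, I would place a single $v$ at the very beginning and then insert a copy of $v$ immediately before each occurrence of every out-neighbour, possibly after first padding $w'$ to make every letter appear the same number of times.

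The main obstacle is making this insertion work for \emph{non}-neighbours: I must ensure that for every $u\not\sim v$ the letters $v$ and $u$ fail to alternate, while for every $u\sim v$ they do alternate. The latter condition is the easy one, by design of the insertion. The delicate part is the former: a naive insertion could accidentally create alternation of $v$ with some non-neighbour $u$. This is precisely where the hypothesis of semi-transitivity (as opposed to mere transitivity) must be used. The plan is to show that if an unwanted alternation were created, one could extract a directed path $v\to x_1\to\cdots\to x_t\to u$ in $\vec G$ on which $v\to u$ would be a forbidden shortcut, contradicting semi-transitivity. I anticipate that making this argument clean will require choosing the insertion positions with care and perhaps normalising $w'$ (e.g. making it $k$-uniform and then passing to a multiple of it) so that the combinatorics of alternation is manageable. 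Once this step is in place, the induction closes and both directions are established.
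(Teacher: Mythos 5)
The paper does not actually prove Theorem~\ref{thm:semi-trans}: it is imported verbatim from \cite{HalKitPya2} and used as a black box, so there is no internal proof to compare yours against. Judged on its own terms, your plan for the forward direction (word-representable $\Rightarrow$ semi-transitively orientable) is sound and is essentially the standard argument: orient by first-occurrence order, note acyclicity, and chain the positional constraints coming from alternation of consecutive pairs and of the shortcut pair $v_1v_k$ to force every $v_iv_j$ to alternate. (The bookkeeping is cleanest if you first pass to a $k$-uniform word, which the paper records is always possible, but the argument can be made to work without it.)

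The backward direction, however, has a genuine gap, and it sits exactly at the point you flag as ``delicate.'' First, the insertion rule you describe --- one $v$ at the start and a copy of $v$ immediately before \emph{each} occurrence of \emph{every} out-neighbour --- does not even give alternation of $v$ with its neighbours: in the reduced word on $\{v,a\}$ for a fixed out-neighbour $a$, the copies of $v$ inserted before occurrences of the \emph{other} out-neighbours pile up between consecutive $a$'s, so the pattern is $vv\cdots va\cdots$ rather than $vava\cdots$. What is actually needed is to first rearrange $w'$ so that the occurrences of the out-neighbourhood are grouped into rounds (one block per round) and then insert one $v$ per block; proving that such a rearrangement exists and still represents $G-v$ is the real content of this direction. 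Second, your mechanism for excluding unwanted alternations --- ``extract a directed path $v\to x_1\to\cdots\to x_t\to u$ and contradict semi-transitivity'' --- cannot be run from your induction hypothesis as stated: the hypothesis only gives you \emph{some} word representing $G-v$, with no stated relationship between the occurrence structure of that word and the orientation of $G-v$, so there is nothing to extract a directed path from. To close the induction you must strengthen the inductive statement so that the word for $G-v$ is guaranteed to be compatible with the semi-transitive orientation (e.g.\ uniform with occurrence order respecting a topological sort, and with the out-neighbourhood of the new source suitably blocked). As written, the hard direction is a description of the difficulty rather than a proof of it.
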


From this theorem and the definition of semi-transitivity it follows
that all comparability (i.e. transitively orientable) graphs are word-representable.
Moreover, the theorem implies two more important corollaries. 

\begin{theorem}[\cite{HalKitPya2}]\label{thm:3col} 
All $3$-colorable graphs are word-representable.
\end{theorem}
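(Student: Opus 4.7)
The plan is to invoke Theorem~\ref{thm:semi-trans} and exhibit, for every properly 3-colorable graph $G$, a semi-transitive orientation. The construction is guided by the observation that in a semi-transitive orientation the only thing that can go wrong is a sufficiently long directed path, so if we can orient $G$ so that no directed path has four or more vertices, the semi-transitivity condition will hold vacuously.

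Concretely, I would fix a proper 3-coloring $c:V\to\{1,2,3\}$ of $G$ and orient every edge $xy\in E$ from the endpoint of smaller color to the endpoint of larger color (this is well defined since adjacent vertices receive distinct colors). Along any directed edge the color strictly increases, so the orientation is immediately acyclic, and along any directed path $v_1v_2\cdots v_k$ the values $c(v_1)<c(v_2)<\cdots<c(v_k)$ form a strictly increasing sequence in $\{1,2,3\}$. Hence $k\le 3$, and there is no directed path on $k\ge 4$ vertices to witness a violation of semi-transitivity. Thus the orientation is semi-transitive.

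Applying Theorem~\ref{thm:semi-trans}, $G$ is word-representable, which is the desired conclusion. There is essentially no obstacle in this argument; the only thing to be careful about is noting that the number of available colors (three) exactly matches the threshold $k\ge 4$ appearing in the definition of semi-transitive orientation, which is precisely what makes the strategy ``orient along increasing colors'' succeed here and not, say, for 4-colorable graphs.
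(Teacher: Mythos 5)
Your construction is exactly the paper's: orienting each edge from the smaller color class to the larger one is precisely the paper's orientation from I to II and III, and from II to III. You simply spell out the (correct) verification that such an orientation admits no directed path on four vertices, which the paper leaves implicit.
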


\begin{proof} 
Partitioning a 3-colorable graph in three independent sets, say I, II and III, 
and orienting all edges in the graph so that they are oriented from I to II and III, and from II to III, 
we obtain a semi-transitive orientation.
\end{proof}

\begin{theorem}[\cite{HalKitPya2}]\label{thm:3deg} 
All graphs of vertex degree at most $3$ are word-representable.
\end{theorem}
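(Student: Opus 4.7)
The plan is to deduce this from Theorem~\ref{thm:3col} by way of Brooks' theorem. Brooks' theorem guarantees that every connected graph of maximum degree $\Delta$ is $\Delta$-colorable unless it is $K_{\Delta+1}$ or (when $\Delta = 2$) an odd cycle. Specialising to $\Delta \leq 3$, the only obstruction to $3$-colorability is a $K_4$-component: any connected graph of vertex degree at most~$3$ is either $3$-colorable, and hence word-representable by Theorem~\ref{thm:3col}, or isomorphic to $K_4$, which is word-representable as a complete graph (e.g.\ by the word $1234$).

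It then remains to lift the statement from connected components to arbitrary graphs, i.e.\ to verify that a disjoint union of word-representable graphs is again word-representable. For this I would use the fact, recalled just above the theorem, that every word-representable graph is $k$-representable for some $k$. Given two word-representable graphs $G_1$ and $G_2$ on disjoint alphabets with $k_i$-uniform representing words $w_i$, one may replace each $w_i$ by a concatenation of enough copies of itself to reach a common uniform value $k \geq 2$, and then form $w = w_1 w_2$. In $w$ any letter $x \in V(G_1)$ appears $k \geq 2$ times consecutively before any letter $y \in V(G_2)$ occurs, so such pairs do not alternate; meanwhile, the alternation behaviour inside each $V(G_i)$ is unchanged. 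Hence $w$ represents the disjoint union $G_1 \cup G_2$, and iterating handles any finite union of connected components.

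The only real obstacle is the single exception to Brooks' theorem, namely the $K_4$-component case: without the closure-under-disjoint-union step, the argument would only cover graphs none of whose components is $K_4$. Once both ingredients, Brooks' theorem and the concatenation trick for $k$-uniform representations, are in place, the theorem follows immediately.
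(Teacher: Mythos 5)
Your proof is correct and follows essentially the same route as the paper: Brooks' Theorem reduces to the $3$-colorable case handled by Theorem~\ref{thm:3col}, with $K_4$ treated separately as a complete graph. The only difference is that you explicitly justify closure under disjoint union via $k$-uniform concatenation, a step the paper's proof leaves implicit.
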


\begin{proof} 
By Brooks' Theorem, every connected graph of vertex degree at most 3, except for the complete graph $K_4$, 
is 3-colorable, and hence word-representable by Theorem~\ref{thm:3col}. 
Also, as we observed earlier, all complete graphs are word-representable. Therefore, all connected graphs of degree 
at most 3 and hence all graphs of degree at most 3 are word-representable.
\end{proof}

Whether all graphs of degree at most 4 are word-representable is a natural question following from Theorem~\ref{thm:3deg},
which was originally asked in \cite{HalKitPya2}.  In the next section, we settle this question negatively.

%%%%%%%%%%%%%%%%%%%%%%%%%%%%%%%%%%%%%%%%%%%%
%%%%%%%%%%%%%%%%%%%%%%%%%%%%%%%%%%%%%%%%%%%%
%%%%%%%%%%%%%%%%%%%%%%%%%%%%%%%%%%%%%%%%%%%%
%%%%%%%%%%%%%%%%%%%%%%%%%%%%%%%%%%%%%%%%%%%%
%%%%%%%%%%%%%%%%%%%%%%%%%%%%%%%%%%%%%%%%%%%%

\section{A non-representable graph of vertex degree at most 4}
\label{sec:deg4}

%%%%%%%%%%%%%%%%%%%%%%%%%%%%%%%%%%%%%%%%%%%%
%%%%%%%%%%%%%%%%%%%%%%%%%%%%%%%%%%%%%%%%%%%%
%%%%%%%%%%%%%%%%%%%%%%%%%%%%%%%%%%%%%%%%%%%%
%%%%%%%%%%%%%%%%%%%%%%%%%%%%%%%%%%%%%%%%%%%%

The main result of this section is that the graph $A$ represented in Figure~\ref{fig:A} is not word-representable.
To prove this, we will show that this graph does not admit a semi-transitive orientation. 
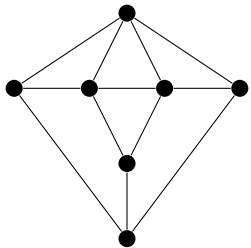
\begin{figure}[ht]
\begin{center}
	\begin{tikzpicture}
  		[scale=1,auto=left]

		\node[vertex] (n1) at (0,0)   { };
		\node[vertex] (n2) at (0,1)   { };
		\node[vertex] (n3) at (-1.5,2)   { };
		\node[vertex] (n4) at (-0.5,2)   { };
		\node[vertex] (n5) at (0.5,2)   { };
		\node[vertex] (n6) at (1.5,2)   { };
		\node[vertex] (n7) at (0,3)   { };

		\foreach \from/\to in {n1/n2, n1/n3, n1/n6, n2/n4, n2/n5, n7/n3, n7/n4, n7/n5, n7/n6, n3/n4, n4/n5, n5/n6}
    	\draw (\from) -- (\to);

%		\coordinate [label=center:$A$] (1) at (0,-0.5);
%		\coordinate [label=center:Figure $1$.] (1) at (0,-1.5);
	\end{tikzpicture}
\end{center}
\caption{The graph $A$}
\label{fig:A}
\end{figure}

\noindent
Our proof is a case analysis and the following lemma will be used frequently in the proof. 

\begin {lemma}\label{lem:1}
Let $D$ be a $K_4$-free graph admitting a semi-transitive orientation. 
Then no cycle of length $4$ in this orientation has three  consecutively oriented edges. 
\end{lemma}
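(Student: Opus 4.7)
The plan is to argue by contradiction: assume a $4$-cycle $v_1v_2v_3v_4v_1$ in $D$ has three consecutively oriented edges, say (without loss of generality) $v_1\to v_2$, $v_2\to v_3$, $v_3\to v_4$, and derive a $K_4$.

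First I would pin down the orientation of the remaining edge $v_1v_4$. Since a semi-transitive orientation is acyclic, the arc $v_4\to v_1$ is impossible (it would close a directed $4$-cycle with the three given arcs), so the edge must be oriented $v_1\to v_4$. At this point the four vertices carry a directed path $v_1v_2v_3v_4$ of length three together with the extra arc $v_1\to v_4$ connecting its endpoints.

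Next I would invoke the definition of semi-transitivity applied to the directed path $v_1v_2v_3v_4$ (which has $k=4$, the minimum length where the shortcut condition becomes nontrivial). Since $v_1v_4\in E$, the arc $v_1\to v_4$ is a shortcut, and the semi-transitive condition forces $v_iv_j\in E$ for all $1\le i<j\le 4$. In particular $v_1v_3$ and $v_2v_4$ are edges, so together with the four edges of the original cycle the vertices $v_1,v_2,v_3,v_4$ induce a $K_4$, contradicting the hypothesis that $D$ is $K_4$-free.

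There is no real obstacle here: the argument is essentially just the observation that a directed $P_4$ whose endpoints are joined by an arc is precisely the configuration the shortcut axiom outlaws unless the underlying vertex set spans a clique. The only thing to be careful about is justifying the "without loss of generality'' choice of orientation, which is immediate since relabelling the cycle cyclically (or reversing all arcs) preserves both the $K_4$-free hypothesis and the semi-transitivity of the orientation.
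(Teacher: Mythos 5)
Your proof is correct and takes essentially the same route as the paper's: acyclicity forces the fourth edge to be oriented $v_1\to v_4$, which makes it a shortcut over the directed path $v_1v_2v_3v_4$, and semi-transitivity then forces both chords $v_1v_3$ and $v_2v_4$, producing a $K_4$. Your write-up merely spells out in more detail the steps the paper compresses into three sentences.
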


\begin{proof}
If a semi-transitive orientation of $D$ contains a cycle of length four with three consecutively oriented edges,
then the fourth edge has to be oriented in the opposite direction to avoid an oriented cycle. 
However, the fourth edge now creates a shortcut. Hence the cycle must contain both chords to make it transitive, 
which is impossible because $D$ is $K_4$-free.
\end{proof}

\begin{theorem}\label{thm:A}
The graph $A$ does not admit a semi-transitive orientation.
\end{theorem}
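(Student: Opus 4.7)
I would argue by contradiction: suppose $A$ admits a semi-transitive orientation $\vec A$. First, $A$ is $K_4$-free---the three candidate $4$-cliques $\{n_3,n_4,n_5,n_7\}$, $\{n_4,n_5,n_6,n_7\}$, $\{n_2,n_4,n_5,n_7\}$ each miss a diagonal ($n_3n_5$, $n_4n_6$, $n_2n_7$ respectively)---so Lemma~\ref{lem:1} applies to every $4$-cycle of $A$. The combinatorial data I would exploit are the four triangles $T_1=n_3n_4n_7$, $T_2=n_4n_5n_7$, $T_3=n_5n_6n_7$, $T_4=n_2n_4n_5$ (each of which must be transitively oriented), the three chordless $4$-cycles $n_1n_3n_7n_6$, $n_1n_2n_4n_3$, $n_1n_2n_5n_6$ passing through $n_1$, and the three chorded $4$-cycles $n_3n_4n_5n_7$, $n_4n_5n_6n_7$, $n_2n_4n_7n_5$. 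Finally, the graph admits the reflective symmetry $\sigma$ swapping $n_3\leftrightarrow n_6$ and $n_4\leftrightarrow n_5$ (fixing $n_1,n_2,n_7$), and semi-transitivity is preserved under reversing all orientations, so every configuration may be considered up to $\sigma$ and global reversal.

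The plan is to branch first on the orientation of the central edge $n_4n_5$---by reversal, assume $n_4\to n_5$---and then on the role of $n_7$ in $T_2$: $n_7$ is a sink ($n_4,n_5\to n_7$), a source ($n_7\to n_4,n_5$), or a middle vertex ($n_4\to n_7\to n_5$, with the opposite middle obtained from this by $\sigma$). In each case, Lemma~\ref{lem:1} applied to the chorded $4$-cycles $n_3n_4n_5n_7$ and $n_4n_5n_6n_7$ immediately forces the orientations of $n_3n_4$ and $n_5n_6$, since three consecutive cooriented edges around either cycle would create a shortcut that is impossible in a $K_4$-free graph. Transitivity of $T_1$ and $T_3$ then determines $n_3n_7$ and $n_6n_7$ up to a small further binary split; triangle $T_4$ combined with Lemma~\ref{lem:1} on $n_2n_4n_7n_5$ determines $n_2n_4$ and $n_2n_5$; and finally Lemma~\ref{lem:1} applied to the three chordless $4$-cycles through $n_1$ forces the orientations of $n_1n_2$, $n_1n_3$, $n_1n_6$---or yields an immediate contradiction in the form of three cooriented consecutive edges around one of those cycles.

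In every surviving leaf of the case tree I expect to reach either a directed triangle inside one of $T_1,\dots,T_4$ (contradicting acyclicity) or a forbidden shortcut: a directed $3$-path $v_1\to v_2\to v_3\to v_4$ with $v_1v_4\in E$ but some intermediate edge $v_1v_3$ or $v_2v_4$ absent. The non-edges of $A$ most useful for catching such shortcuts are $n_1n_4$, $n_1n_5$, $n_1n_7$, $n_2n_3$, $n_2n_6$, $n_2n_7$, $n_3n_5$, $n_3n_6$, and $n_4n_6$. The main obstacle is the combinatorial bookkeeping: the case tree is wide, and a concise proof hinges on branching in an order that triggers the $K_4$-free obstructions as early as possible. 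Starting with $n_4n_5$ and then $T_2$ appears optimal, because both chorded $4$-cycles flanking the edge $n_4n_5$ become immediately active and simultaneously constrain the two arms $n_3n_4$ and $n_5n_6$ of the spine before the analysis around $n_1$ even begins, and the reflective symmetry $\sigma$ can be reused at each subsequent level of the branching.
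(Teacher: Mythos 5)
Your setup is sound and your intended strategy is essentially the paper's: check that $A$ is $K_4$-free, then run an exhaustive case analysis on edge orientations in which Lemma~\ref{lem:1}, applied to the various $4$-cycles of $A$, propagates forced orientations until some $4$-cycle acquires three consecutively oriented edges (or a triangle becomes cyclic). Your inventory of triangles, chordless and chorded $4$-cycles, and non-edges is correct, and the observation that the automorphism swapping $n_3\leftrightarrow n_6$ and $n_4\leftrightarrow n_5$ can be used alongside global reversal to prune the branching is a genuine improvement over the paper, which exploits only reversal (its Case~2).

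However, the proof itself is missing. Everything after ``The plan is to branch\dots'' describes a case analysis you have not carried out: you write that in every surviving leaf you ``expect to reach'' a directed triangle or a forbidden shortcut, and you name the combinatorial bookkeeping as the main obstacle rather than resolving it. That bookkeeping \emph{is} the theorem --- a priori some leaf could perfectly well survive (the whole point of the result is that no example of this kind was previously known, all graphs of degree at most $3$ being word-representable), so the conclusion cannot be read off from the shape of the argument. Moreover, at least one of your claimed immediate forcings is too optimistic: in the branch $n_4\to n_5$, $n_4\to n_7$, $n_5\to n_7$, Lemma~\ref{lem:1} on the cycle $n_3n_4n_5n_7$ does force $n_4\to n_3$, but on the cycle $n_4n_5n_6n_7$ it only excludes the two joint choices $\{n_5\to n_6,\ n_6\to n_7\}$ and $\{n_7\to n_6,\ n_6\to n_5\}$, and does not determine $n_5n_6$ by itself; so the tree is wider than your sketch suggests and genuinely has to be walked. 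The paper does exactly this, exhibiting the terminal contradiction explicitly in each of its dozen or so leaves; to complete your argument you must do the same (your symmetry would let you do it with fewer diagrams, but not with none).
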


\begin{proof} 
In order to prove that $A$ does not admit a semi-transitive orientation, we will explore {\it all} orientations 
of this graph and will show that each choice leads to a contradiction. At each step of the proof we choose 
a vertex and split the analysis into two cases depending on the orientation of the chosen edge. 
The chosen edge and its orientation will be shown by a solid arrow 
(\begin{tikzpicture} \draw [-latex] (0,0.1) -- (0.5,0.1); \draw [-] [white] (0,0) -- (0.01,0); \end{tikzpicture}). 
This choice of orientation may lead to other edges having an orientation assigned to them to satisfy Lemma~\ref{lem:1}. 
The orientations forced by the solid arrow through an application of  Lemma~\ref{lem:1} will be shown by means of double
arrows (\begin{tikzpicture} \draw [->>] (0,0.1) -- (0.5,0.1); \draw [-] [white] (0,0) -- (0.01,0); \end{tikzpicture}).
When we use Lemma~\ref{lem:1} to derive a double arrow, we always apply it with respect to a particular cycle of length 4. 
This cycle will be indicated by four white vertices. Since the graph $A$ has many cycles of length 4, repeated use 
of Lemma~\ref{lem:1} applied to different cycles may lead to a contradiction, where one more cycle of length 4 
has three consecutively oriented edges. We will show that in all possible cases a contradiction of this type arises. 
The proof is illustrated by diagrams.

\medskip
\noindent
{\it Case} 1: We start by choosing the orientation for the edge indicated in the diagram below.

\begin{center}
	% [inline block 0: 21 envs, 64140 chars -> data_tex | \begin{tikzpicture}   		[scale=1,auto=left]...]


\end{center}

\medskip
\noindent
{\it Case} 2. In this case, we reverse the orientation of the edge chosen in Case 1 and complete the proof by symmetry,
i.e. by reversing the orientations obtained in Case 1.
\end{proof}

%%%%%%%%%%%%%%%%%%%%%%%%%%%%%%%%%%%%%%%%%%%%%%%%%%%%%%%%%%%%%%%%%%%%%%%%%%%%%
%%%%%%%%%%%%%%%%%%%%%%%%%%%%%%%%%%%%%%%%%%%%%%%%%%%%%%%%%%%%%%%%%%%%%%%%%%%%%
%%%%%%%%%%%%%%%%%%%%%%%%%%%%%%%%%%%%%%%%%%%%%%%%%%%%%%%%%%%%%%%%%%%%%%%%%%%%%

\section{Asymptotic enumeration of word-representable graphs}
\label{sec:main}
%%%%%%%%%%%%%%%%%%%%%%%%%%%%%%%%%%%%%%%%%%%%%%%%%%%%%%%%%%%%%%%%%%%%%%%%%%%%%
%%%%%%%%%%%%%%%%%%%%%%%%%%%%%%%%%%%%%%%%%%%%%%%%%%%%%%%%%%%%%%%%%%%%%%%%%%%%%
%%%%%%%%%%%%%%%%%%%%%%%%%%%%%%%%%%%%%%%%%%%%%%%%%%%%%%%%%%%%%%%%%%%%%%%%%%%%%

Given a class $X$ of graphs, we write $X_n$ for the number of graphs in $X$ 
with vertex set $\{1,2,\ldots,n\}$, i.e. the number of {\it labelled} graphs in $X$. 
Following \cite{SpHerProp}, we call $X_n$ the {\it speed} of $X$. 

A class $X$ is {\it hereditary} if it is closed under taking induced subgraphs.
Alternatively, $X$ is hereditary if $G\in X$ implies $G-x\in X$ for any vertex 
$x\in V(G)$. Clearly, if $G$ is a word-representable graph and $w$ is a word 
representing $G$, then for any vertex $x\in V(G)$ the word obtained from $w$
by deleting all appearances of $x$ represents $G-x$. This observation leads
to the following obvious conclusion.

\begin{theorem}
The class of word-representable graphs is hereditary.
\end{theorem}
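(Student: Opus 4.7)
The plan is to argue that deletion of a single vertex preserves word-representability; since hereditariness is equivalent to closure under removal of one vertex (iterated), this suffices. The intuition is already sketched in the paragraph preceding the theorem: if $w$ represents $G$, then erasing all copies of the removed vertex from $w$ yields a word representing the remaining induced subgraph.

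More precisely, I would start by fixing a word-representable graph $G=(V,E)$, a vertex $x\in V$, and a word $w$ over the alphabet $V$ witnessing the representability of $G$. Let $w'$ be the word obtained from $w$ by deleting every occurrence of $x$. I would then verify the defining alternation condition for $w'$: for any two distinct letters $y,z\in V\setminus\{x\}$, the subword of $w'$ induced by $\{y,z\}$ equals the subword of $w$ induced by $\{y,z\}$, because the deletion of $x$ does not affect the relative order or multiplicity of occurrences of $y$ and $z$. Hence $y$ and $z$ alternate in $w'$ if and only if they alternate in $w$, which by the choice of $w$ happens if and only if $(y,z)\in E$, which in turn is equivalent to $(y,z)\in E(G-x)$ since $y,z\neq x$. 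Thus $w'$ represents $G-x$, so $G-x$ is word-representable.

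This shows the class is closed under removal of a single vertex. To conclude hereditariness in its standard form (closure under induced subgraphs), I would simply iterate: any induced subgraph of $G$ is obtained by a finite sequence of single-vertex deletions, each of which stays in the class. No obstacle is anticipated here, since the argument is essentially a one-line observation about subword alternation; the only thing to be slightly careful about is the wording of the alternation condition, making sure that erasing $x$ does not create or destroy alternation between any other pair of letters, which is immediate from the definition.
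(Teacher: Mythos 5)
Your proposal is correct and is exactly the argument the paper uses: the paragraph preceding the theorem already observes that deleting all occurrences of a vertex $x$ from a representing word $w$ yields a word representing $G-x$, and the theorem is stated as an immediate consequence. You simply spell out the (easy) verification that alternation between the remaining letters is unaffected, which is a faithful elaboration of the same one-line observation.
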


The speed of hereditary classes and their asymptotic structure have been extensively studied in the literature.
In particular, it is known that for every 
hereditary class $X$ different from the class of all finite graphs,
\begin{equation*}\label{eq:0}
\lim\limits_{n\to\infty}\frac{\log_2 X_n}{\binom{n}{2}}=1-\frac{1}{k(X)},
\end{equation*}
where $k(X)$ is a natural number, called the {\it index} of $X$. To define this notion 
let us denote by ${\cal E}_{i,j}$ the class of graphs whose vertices can be partitioned
into at most $i$ independent sets and $j$ cliques. In particular, ${\cal E}_{p,0}$
is the class of $p$-colorable graphs.
Then $k(X)$ is the largest $k$ such that $X$ contains ${\cal E}_{i,j}$ with $i+j=k$ for some $i$ and $j$. 
This result was obtained independently by Alekseev \cite{Ale92} and Bollob\'{a}s and Thomason  \cite{BT95,BT97}
and is known nowadays as the Alekseev-Bollob\'{a}s-Thomason Theorem (see e.g. \cite{ABT-theorem}).

Since $\binom{n}{2}$ is the minimum number of bits needed to represent an arbitrary 
$n$-vertex labeled graph and $\log_2 X_n$ is the minimum number of bits needed to represent an
$n$-vertex labeled graph in the class $X$,
the ratio $\log_2 X_n/{\binom{n}{2}}$ can be viewed as the coefficient of compressibility
for representing graphs in $X$ and its asymptotic value was called by Alekseev \cite{Ale82} 
the {\it entropy} of $X$.   

\medskip
We now apply the Alekseev-Bollob\'{a}s-Thomason Theorem in order to derive an asymptotic formula
for the number of word-representable graphs. We start with the number of $n$-vertex {\it labelled} 
graphs in this class, which we denote by $b_n$.

\begin{theorem}\label{thm:bn} 
$$\lim_{n\to\infty}\frac{\log_2 b_n}{{n\choose 2}}=\frac{2}{3}.$$
\end{theorem}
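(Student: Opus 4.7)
The plan is to apply the Alekseev--Bollob\'as--Thomason theorem stated just above to the class $X$ of word-representable graphs, which is hereditary by the preceding theorem. Since the theorem gives $\lim_{n\to\infty} \log_2 b_n/\binom{n}{2} = 1 - 1/k(X)$, it suffices to pin down the index at $k(X) = 3$; then the limit is $1 - 1/3 = 2/3$. The lower bound $k(X) \geq 3$ is immediate from Theorem~\ref{thm:3col}, which asserts that every $3$-colorable graph is word-representable and hence ${\cal E}_{3,0} \subseteq X$.

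For the upper bound $k(X) \leq 3$, I have to show that for every pair $(i,j)$ with $i+j = 4$ the class ${\cal E}_{i,j}$ is not contained in $X$, i.e.\ contains at least one non-word-representable graph. The key observation is that a \emph{single} graph does the job for all five pairs $(4,0),(3,1),(2,2),(1,3),(0,4)$: namely, the graph $A$ of Figure~\ref{fig:A}, which is not word-representable by Theorem~\ref{thm:A}. Using the labelling of Figure~\ref{fig:A}, membership in each of the five classes follows from explicit partitions of $V(A)$: the partition into the three cliques $\{n_3,n_4,n_7\},\{n_5,n_6\},\{n_1,n_2\}$ gives $A \in {\cal E}_{0,3} \subseteq {\cal E}_{0,4} \cap {\cal E}_{1,3}$; the partition into the triangle $\{n_3,n_4,n_7\}$ together with the independent sets $\{n_1,n_5\}$ and $\{n_2,n_6\}$ gives $A \in {\cal E}_{2,1} \subseteq {\cal E}_{3,1}$; the partition into the two cliques $\{n_3,n_4,n_7\},\{n_5,n_6\}$ and the singletons $\{n_1\},\{n_2\}$ gives $A \in {\cal E}_{2,2}$; and finally a proper $4$-colouring of $A$, guaranteed by Brooks' theorem (since $\Delta(A)=4$ and $A\neq K_5$), gives $A \in {\cal E}_{4,0}$.

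Combining the two bounds yields $k(X) = 3$, and the asymptotic formula follows. The case verifications in the upper bound are the only concrete computation required, and all of them are routine because $A$ has only seven vertices; the real conceptual content has already been invested in Section~\ref{sec:deg4}, where $A$ was established as a non-representable graph. That is also where I expect any reader difficulty to lie, not in the present argument, which merely harvests the consequences of the existence of $A$ via the Alekseev--Bollob\'as--Thomason machine.
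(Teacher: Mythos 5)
Your proof is correct and follows essentially the same route as the paper: establish $k(X)\geq 3$ via Theorem~\ref{thm:3col}, and establish $k(X)\leq 3$ by observing that the non-word-representable graph $A$ lies in every class ${\cal E}_{i,j}$ with $i+j=4$, then invoke the Alekseev--Bollob\'as--Thomason theorem. The only difference is that you spell out the explicit partitions witnessing $A\in{\cal E}_{i,j}$ for each of the five pairs, a verification the paper merely asserts; your partitions check out against the edge set of $A$.
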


\begin{proof}
By Theorem~\ref{thm:3col}, ${\cal E}_{3,0}$ is a subclass of the class of word-representable graphs
and hence its index is at least 3. In order to show that the index does not exceed 3, we observe that 
the graph $A$ represented in Figure~\ref{fig:A} belongs to all minimal classes of index 4,
and hence the family of word-representable graphs does not contain any of these minimal classes by 
Theorems~\ref{thm:semi-trans} and~\ref{thm:A}.
Therefore, the index of the class of word-representable graphs is precisely 3. 
\end{proof}

\medskip
We now proceed to the number of unlabelled  $n$-vertex word-representable graphs, which we denote by $a_n$.

\begin{theorem}\label{thm:an} 
$$\lim_{n\to\infty}\frac{\log_2 a_n}{{n\choose 2}}=\frac{2}{3}.$$
\end{theorem}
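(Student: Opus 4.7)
The plan is to deduce Theorem \ref{thm:an} from Theorem \ref{thm:bn} by a sandwich argument relating labelled and unlabelled counts, which works uniformly for any hereditary class. The key observation is that every unlabelled $n$-vertex graph has at least one and at most $n!$ distinct labellings, so $a_n \le b_n \le n!\,a_n$.

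First I would take base-2 logarithms of both inequalities to obtain
\[
\log_2 b_n - \log_2 n! \;\le\; \log_2 a_n \;\le\; \log_2 b_n.
\]
Then I would divide through by $\binom{n}{2}$ to get the double inequality
\[
\frac{\log_2 b_n}{\binom{n}{2}} - \frac{\log_2 n!}{\binom{n}{2}} \;\le\; \frac{\log_2 a_n}{\binom{n}{2}} \;\le\; \frac{\log_2 b_n}{\binom{n}{2}}.
\]
Since $\log_2 n! = O(n\log n)$ (for instance by Stirling's approximation) and $\binom{n}{2} = \Theta(n^2)$, the correction term $\log_2 n!/\binom{n}{2}$ is $O(\log n / n)$ and hence tends to $0$ as $n\to\infty$.

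Finally, applying Theorem \ref{thm:bn}, both the left and right sides of the inequality above converge to $\tfrac{2}{3}$. By the squeeze principle, the middle term $\log_2 a_n / \binom{n}{2}$ also converges to $\tfrac{2}{3}$, which is exactly the claim. There is really no serious obstacle here; the only subtlety is to note the factorial correction is negligible on the scale $\binom{n}{2}$, which is exactly why the asymptotic entropy coincides for the labelled and unlabelled speeds of any hereditary class. Combined with the upper bound $a_n \le b_n \le 2^{n^2/3 + o(n^2)}$ and a matching lower bound via $\mathcal{E}_{3,0}$, this yields the asymptotic formula $a_n = 2^{n^2/3 + o(n^2)}$ stated in the abstract.
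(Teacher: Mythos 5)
Your argument is correct and is essentially identical to the paper's own proof: both rely on the sandwich $a_n \le b_n \le n!\,a_n$, the bound $\log_2 n! \le n\log_2 n = o\bigl(\binom{n}{2}\bigr)$, and Theorem~\ref{thm:bn} to squeeze the limit. No issues.
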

\begin{proof}
Clearly, $b_n\le n!a_n$ and $\log_2 n!\le\log_2 n^n =n\log_2 n$.
Therefore, 
$$\lim_{n\to\infty}\frac{\log_2 b_n}{{n\choose 2}}\le \lim_{n\to\infty}\frac{\log_2 (n!a_n)}{{n\choose 2}}=
\lim_{n\to\infty}\frac{\log_2 n! + \log_2 a_n}{{n\choose 2}}\le
\lim_{n\to\infty}\frac{n\log_2 n + \log_2 a_n}{{n\choose 2}}=\lim_{n\to\infty}\frac{\log_2 a_n}{{n\choose 2}}.$$

On the other hand, obviously $b_n\ge a_n$ and hence $\lim_{n\to\infty}\frac{\log_2 b_n}{{n\choose 2}}\ge\lim_{n\to\infty}\frac{\log_2 a_n}{{n\choose 2}}$.
Combining, we obtain $\lim_{n\to\infty}\frac{\log_2 b_n}{{n\choose 2}}=\lim_{n\to\infty}\frac{\log_2 a_n}{{n\choose 2}}$.
Together with Theorem~\ref{thm:bn}, this proves the result.
\end{proof}

\begin{corollary}\label{cor-main}
$$a_n=2^{\frac{n^2}{3}+o(n^2)}.$$
\end{corollary}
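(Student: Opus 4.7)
The plan is a direct unpacking of Theorem~\ref{thm:an} into the stated closed form, so this step really is a one-line calculation rather than a new argument. First I would rewrite the conclusion of Theorem~\ref{thm:an} in the equivalent ``little-oh'' form
$$\log_2 a_n \;=\; \tfrac{2}{3}\binom{n}{2} \;+\; o(n^2),$$
which is simply the definition of the stated limit being $2/3$ combined with the fact that $\binom{n}{2} = \Theta(n^2)$.

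Next I would expand the binomial coefficient as $\binom{n}{2} = \tfrac{n^2}{2} - \tfrac{n}{2}$, so that $\tfrac{2}{3}\binom{n}{2} = \tfrac{n^2}{3} - \tfrac{n}{3}$. Since $n/3 = o(n^2)$, the linear term is absorbed into the error term, yielding
$$\log_2 a_n \;=\; \tfrac{n^2}{3} \;+\; o(n^2).$$
Exponentiating base $2$ then gives $a_n = 2^{n^2/3 + o(n^2)}$, which is the claimed formula.

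There is essentially no obstacle: the only thing to be a little careful about is the book-keeping of error terms, namely checking that both the gap between $\tfrac{2}{3}\binom{n}{2}$ and $\tfrac{n^2}{3}$, and the implicit $o(n^2)$ coming from the limit in Theorem~\ref{thm:an}, can be combined into a single $o(n^2)$ error. Since both are $o(n^2)$, their sum is $o(n^2)$, and the corollary follows immediately.
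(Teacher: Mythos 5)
Your proposal is correct and is exactly the routine unpacking the paper intends: the corollary is stated without proof as an immediate consequence of Theorem~\ref{thm:an}, and your computation (rewriting the limit as $\log_2 a_n = \frac{2}{3}\binom{n}{2}+o(n^2)$ and absorbing the linear term of $\binom{n}{2}$ into the error) is the argument the authors leave implicit. Nothing is missing.
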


% \section{Concluding remark}

% A natural open problem is to better estimate the low term in the exponent of $2^{\frac{n^2}{3}+o(n^2)}$.
% However, our knowledge of word-representable graphs is still too limited to answer this question. 

\section{Acknowledgment}

The authors are grateful to the Edinburgh Mathematical Society for supporting 
the last authors' visit of the University of Strathclyde, during which parts of the presented results were obtained.
The last author also acknowledges support from DIMAP - the Center
for Discrete Mathematics and its Applications at the University of Warwick,
and from EPSRC, grant EP/I01795X/1.

\end{document}